\theoremstyle{plain}
\newtheorem{theor}{Theorem}[section]
\newtheorem{claim}[theor]{Claim}
\newtheorem{prop}[theor]{Proposition}
\newtheorem{cor}[theor]{Corollary}
\theoremstyle{remark}
\def\la{\left\langle}
\def\r{\right}
\def\ra{\r\rangle}
\def\PP{{\mathbb P}}
\newcommand{\gam}{\gamma}
\newcommand{\eps}{\varepsilon}
\def\no{\|\cdot\|}
\def\R{{\mathbb R}}
\def\RR{{\mathbb{R}^n}}
\def\Event{{\mathcal E}}
\def\Prob{{\mathbb P}}
\def\Exp{{\mathbb E}}
\def\dist{{\rm dist}}
\def\spn{{\rm span}}
\def\col{{\rm \bf C}}
\def\row{{\rm \bf R}}
\title{Small ball probability for the condition number of random matrices}
\author{Alexander E. Litvak\footnote{University of Alberta} \and Konstantin Tikhomirov\footnote{Georgia Institute of Technology}
\and Nicole Tomczak-Jaegermann$^*$}
\newcommand\address{\noindent\leavevmode

\medskip

\noindent
Alexander E. Litvak
%Anna Lytova, Konstantin Tikhomirov
and Nicole Tomczak-Jaegermann,\\
Dept.~of Math.~and Stat.~Sciences,\\
University of Alberta, \\
Edmonton, AB, Canada, T6G 2G1.\\
\texttt{\small
e-mails:  aelitvak@gmail.com \, \, and \, \,
%lytova@ualberta.ca, kt12@math.princeton.edu, \\
nicole.tomczak@ualberta.ca}

\medskip

\noindent
Konstantin Tikhomirov,\\
School of Mathematics,\\
Georgia Institute of Technology,\\
686 Cherry street,\\
Atlanta, GA 30332, USA.\\
\texttt{\small%
e-mail:  ktikhomirov6@gatech.edu}\\

}
\begin{document}

\maketitle

\begin{abstract}
Let $A$ be an $n\times n$ random matrix
with i.i.d.\ entries of zero mean, unit variance
and a bounded subgaussian moment.
We show that the condition number $s_{\max}(A)/s_{\min}(A)$
satisfies the small ball probability estimate
$$\Prob\big\{s_{\max}(A)/s_{\min}(A)\leq n/t\big\}\leq 2\exp(-c t^2),\quad t\geq 1,$$
where $c>0$ may only depend on the subgaussian moment.
Although the estimate can be obtained as a combination of known results
and techniques, it was not noticed in the literature before.
As a key step of the proof, we apply estimates for the singular values of $A$,
$\Prob\big\{s_{n-k+1}(A)\leq ck/\sqrt{n}\big\}\leq 2 \exp(-c k^2), \quad 1\leq k\leq n,$
obtained (under some additional assumptions) by Nguyen.%K-lastminute for the range $1\leq k\leq \sqrt{n}$.
\end{abstract}

\noindent
{\small \bf AMS 2010 Classification:}
{\small
%primary
60B20,  15B52, 46B06, 15A18.
%secondary
}

\noindent
{\small \bf Keywords: }
{\small
Random matrices, condition number, small ball probability, invertibility, smallest singular value.
}

\section{Introduction}

We say that a random variable $\xi$ has subgaussian moment bounded above by $K> 0$
if
%$\big(\Exp |\xi|^p\big)^{1/p}\leq K\sqrt{p}$ for all $p\geq 1$.
$$
   \PP \{|\xi|\geq t\} \leq \exp\big(1-t^2/(2K^2)\big), \quad t\geq 0.
$$

Let $A$ be an $n\times n$ random matrix with i.i.d.\ entries of zero mean, unit variance
and subgaussian moment bounded above by $K$, and denote by $s_i(A)$, $1\leq i\leq n$,
its singular values arranged in non-increasing order. We will write $s_{\max}(A)$
and $s_{\min}(A)$ for $s_1(A)$ and $s_n(A)$, respectively.
Estimating the magnitude of the condition number,
$$\kappa(A)=s_{\max}(A)/s_{\min}(A),$$
is a well studied problem, with connections to numerical analysis and computation
of the limiting distribution of the matrix spectrum; we refer, in particular, to
\cite{TV} for discussion.
Since the largest singular value $s_{\max}(A)$ is strongly concentrated
(see the proof of Corollary~\ref{cond} below),
estimating $\kappa(A)$ is essentially reduced to estimating $s_{\min}(A)$ from above and below.

The main result of \cite{RV square} provides small ball probability estimates for $s_{\min}(A)$ of the form
$$\Prob\big\{s_{\min}(A)\leq t/\sqrt{n}\big\}\leq Ct+e^{-c n},\quad t\leq 1,$$
for some $C,c>0$ depending only on the subgaussian moment.
It seems natural to investigate the complementary regime --- the large deviation estimates for $s_{\min}(A)$.
It was shown in \cite{RV sm ball} that
$$
\Prob\big\{s_{\min}(A)\geq t/\sqrt{n}\big\}\leq \frac{C\ln t}{t}+e^{-c n},\quad t\geq 2
$$
(see also \cite{Katya} for an extension of this result to distributions with no assumptions on moments higher than $2$).
The probability estimate was improved in \cite{NV} to
$$
\Prob\big\{s_{\min}(A)\geq t/\sqrt{n}\big\}\leq e^{-ct},\quad t\geq 2,
$$
for $c>0$ depending only on the subgaussian moment (see also \cite{Wei} for a generalization to intermediate singular values).
The existing results on the distribution of the singular values of random Gaussian matrices
\cite{Ed,Szarek} suggest that the optimal dependence on $t$ in the exponent on the right hand side is quadratic,
i.e.\ the variable $\sqrt{n}\,s_{\min}(A)$ is subgaussian.
Specifically, it is shown in \cite{Szarek} that $s_{\min}(G)$ for the standard $n\times n$
Gaussian matrix $G$ satisfies two-sided estimates
$$
\exp(-C t^2)\leq \Prob\big\{s_{\min}(G)\geq t/\sqrt{n}\big\}\leq \exp(-c t^2),\quad t\geq C_1,
$$
where $C, C_1, c>0$ are some universal constants.
The main result of our note provides matching upper estimate for matrices with subgaussian entries:

\begin{theor}\label{th: smin}
Let $A$ be an $n\times n$ random matrix with i.i.d.\ entries of zero mean, unit variance, and
subgaussian moment bounded above by $K>0$.
Then the smallest singular value $s_{\min}(A)$ satisfies
$$
\Prob\big\{s_{\min}(A)\geq t/\sqrt{n}\big\}\leq 2\exp(-c t^2),\quad t\geq 1,
$$
%for every $t\geq 1$,
where $c>0$ is a constant depending only on $K$.
\end{theor}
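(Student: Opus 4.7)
I would approach the proof in three stages.

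\medskip
\noindent
\textbf{Trivial ranges.} For $t$ bounded by an absolute constant the bound $2\exp(-ct^2)$ exceeds $2$ after a suitable choice of $c$, so nothing needs proving. When $t \ge C_1 n$, the standard subgaussian concentration $\Prob\{s_{\max}(A) \ge K_0 \sqrt{n}\} \le 2\exp(-cn)$ combined with $s_{\min}(A) \le s_{\max}(A)$ absorbs the bound since $t^2 \ge n$ forces $e^{-cn} \le e^{-ct^2/C_1^2 n}$, which is admissible. Thus the main work lies in the intermediate range $C_0 \le t \le C_1 n$.

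\medskip
\noindent
\textbf{Central reduction.} The identity
$$\|A^{-1}\|_{HS}^2 \;=\; \sum_{i=1}^n \frac{1}{s_i(A)^2} \;=\; \sum_{i=1}^n \frac{1}{\dist(A_i, H_i)^2},$$
where $A_i$ is the $i$-th column and $H_i = \spn(A_j : j \neq i)$, together with the implication $s_{\min}(A) \ge t/\sqrt{n} \Rightarrow \|A^{-1}\|_{HS}^2 \le n^2/t^2$, reduces the task to a small-ball estimate for $\|A^{-1}\|_{HS}^2$. To exploit this, I would pass to the event
$$\Omega \;:=\; \bigcap_{k \ge 1}\bigl\{s_{n-k+1}(A) \ge c_0 k/\sqrt{n}\bigr\}, \qquad \Prob(\Omega^c) \le \sum_{k \ge 1} 2 e^{-c_1 k^2},$$
coming from Nguyen's estimate and a union bound. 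On $\Omega$, the bulk of the singular values are well separated from zero, and in particular $\sum_{i=1}^{n-1} 1/s_i(A)^2 \le C n$. Combined with the HS identity, this forces the smallest of the column distances $\dist(A_i,H_i)$ to be atypically large — a one-dimensional small-ball event for the conditional inner product $\la A_i, v_i\ra$ with $v_i$ normal to $H_i$.

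\medskip
\noindent
\textbf{Main obstacle and conclusion.} The crux is to bootstrap the naive subgaussian tail $\Prob\{\dist(A_1, H_1) \ge t/\sqrt{n}\} \le 2\exp(-c t^2/n)$ into the required $2\exp(-c t^2)$; the missing factor of $n$ in the exponent is precisely the difficulty. The gain must come from using all $n$ column distances jointly, exploiting the rigidity of the column geometry provided by $\Omega$. A natural route is to run the argument through a Gram--Schmidt orthogonalisation of the columns, whose successive orthogonal components are conditionally independent subgaussian projections; the joint small-ball event then tensorises. The role of Nguyen's estimate in this step is essential: it guarantees that each successive orthogonal complement has dimension comparable to its expected value, so that the conditional subgaussian tails can be multiplied without incurring a loss that destroys the $t^2$ exponent. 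Balancing the failure probability $\sum_k 2 e^{-c k^2}$ of $\Omega$ (which is already at worst $2e^{-ct^2}$ provided we only need Nguyen up to index $k \asymp t$) against the tensorised distance small-ball then produces the claimed $2\exp(-ct^2)$ bound. I expect the cleanest execution of this tensorisation step to be the most technical part of the argument.
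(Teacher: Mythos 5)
Your sketch has the right raw ingredients (the negative second moment identity, Nguyen's bounds, conditional independence of column distances), but the mechanism you propose for gaining the factor $n$ does not work, and the actual pivot of the proof is missing. Tensorising the events $\{\dist(\col_i(A),H_i)\ge t/\sqrt n\}$ along a Gram--Schmidt chain cannot give $e^{-ct^2}$ in the main range $1\ll t\ll n$: after conditioning on $m$ other columns, the distance of a fresh column to their span concentrates at $\sqrt{n-m}$ (Theorem~\ref{RV-HW}), so the constraint ``distance $\ge t/\sqrt n$'' is vacuous except for the last $O(t^2/n)$ steps of the chain, and the product of conditional probabilities is only of order $\exp(-ct^4/n^2)$, far short of $\exp(-ct^2)$ when $t=o(n)$. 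The assertion that Nguyen's estimate ``guarantees that each successive orthogonal complement has dimension comparable to its expected value'' is not a substitute --- those dimensions are deterministic. Relatedly, your event $\Omega$, intersected over \emph{all} $k\ge1$, has failure probability bounded only by a constant, and its small-$k$ part is exactly what your bound $\sum_{i\le n-1}s_i(A)^{-2}\le Cn$ uses; moreover $\le Cn$ is not atypical and yields nothing. What is needed, and what the paper does, is to raise the level of the distance events before tensorising: use Nguyen only for $k\ge\lceil t\rceil$ (excluded probability $\sum_{k\ge t}2e^{-ck^2}\le 3e^{-c't^2}$), bound the head of the spectrum by $\sum_{i<t}s_i(A^{-1})^2\le t\cdot n/t^2=n/t$ using $s_{\min}(A)\ge t/\sqrt n$ itself, and conclude $\|A^{-1}\|_{HS}^2\le Cn/t$. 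By the negative second moment identity at least $n/2$ columns then satisfy $\dist(\col_i(A),H_i)\ge c\sqrt t$, an event of cost $e^{-c't}$ per column; one tensorises only $k\asymp t$ such conditionally independent events against the common span of the other $n-k$ columns, after a counting/permutation-invariance (Markov) step over the $\binom{n}{k}$ subsets to replace the random good set by $[k]$, giving $(2e)^k\big(2e^{-c't}\big)^k\le e^{-ct^2}$. This chain --- threshold $\sqrt t$ instead of $t/\sqrt n$, block size $k\asymp t$, head/tail split, and the subset-counting step --- is the substance of the argument and is absent from your proposal.

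Your treatment of the range $t\ge C_1 n$ is also incorrect: $\Prob\{s_{\max}(A)\ge K_0\sqrt n\}\le 2e^{-cn}$ gives only $e^{-cn}$, while the theorem requires $e^{-ct^2}$ with $c$ depending only on $K$; the inequality $e^{-cn}\le e^{-ct^2/(C_1^2n)}$ you invoke actually fails for $t>C_1 n$, and a bound $e^{-ct^2/n}$ would not be admissible in any case. Even the stronger tail $\Prob\{\|A\|\ge u\}\le 2e^{-cu^2}$ for $u\ge CK\sqrt n$ only yields $e^{-ct^2/n}$. Here too a tensorisation over $\asymp n$ columns is needed: the paper reruns the distance argument with $k=\lfloor n/4\rfloor$ and threshold $t/\sqrt{2n}$, each single-column event costing $e^{-\bar c t^2/n}$ and the $k$ conditionally independent events together giving $e^{-ct^2}$; alternatively one can note that $s_{\min}(A)\ge t/\sqrt n$ forces every column to have norm at least $t/\sqrt n$ and multiply the $n$ independent column-norm tails.
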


%Theorem~\ref{th: smin} is sharp up to value of the constant $c$. Indeed,
%recall that Edelman \cite{Ed} proved that the smallest singular value of the matrix
%$GG^*$, where $G$ is the $n\times n$
%random matrix with i.i.d. standard Gaussian $\NN (0,1)$ entries, converges in probability
%to a random variable with the density function
%$$
% p(t)= \frac{1+\sqrt t}{2\sqrt{t}} \exp (\sqrt{t}-t/2) .
%$$
%Since $s_{\min}(GG^*)= s_{\min}^2(G)$, this implies for $t\geq 6$,
%\begin{align*}
% \Prob\big\{s_{\min}(G)&\geq t/\sqrt{n}\big\}= \int_{t^2}^\infty p(t) \, dt +o_n(1) =
% \int_{t}^\infty (1+u)\exp(u-u^2/2) \, du +o_n(1)
% \\&\leq
% \int_{t}^\infty (1+u)\exp(-u/2-u^2/4) \, du +o_n(1) =
% 2\exp(-t/2-t^2/4) \, du +o_n(1)
% \\&\leq
% \exp(-t^2/4) \, du +o_n(1).
%\end{align*}

As a simple corollary of the theorem, we obtain small ball probability estimates for the condition number:
\begin{cor}
\label{cond}
Let $A$ be an $n\times n$ random matrix with i.i.d.\ entries of zero mean, unit variance, and
subgaussian moment bounded above by $K>0$.
Then the condition number $\kappa(A)$ satisfies
$$
\Prob\big\{\kappa(A)\leq n/t\big\}\leq 2\exp(-c t^2),\quad t\geq 1,
$$
%for every $t\geq 1$,
where $c>0$ is a constant depending only on $K$.
\end{cor}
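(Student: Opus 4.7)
The plan is to rewrite the event $\{\kappa(A)\leq n/t\}$ as a one-sided lower bound on $s_{\min}(A)$, and then invoke Theorem~\ref{th: smin}. Since $\kappa(A)=s_{\max}(A)/s_{\min}(A)$, the event $\{\kappa(A)\leq n/t\}$ coincides with $\{s_{\min}(A)\geq (t/n)\,s_{\max}(A)\}$. So on the high-probability event that $s_{\max}(A)$ is at least of order $\sqrt n$, we are left with a small-ball event for $s_{\min}(A)$ of exactly the form controlled by Theorem~\ref{th: smin}.

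To implement this, I would first lower-bound $s_{\max}(A)$ using the trivial inequality
$$
s_{\max}(A)^2\geq \|A\|_{HS}^2/n = \frac{1}{n}\sum_{i,j}A_{ij}^2.
$$
The entries $A_{ij}^2$ are i.i.d.\ subexponential with mean $1$, so Bernstein-type concentration yields
$$
\Prob\bigl\{s_{\max}(A)<\sqrt{n/2}\bigr\}\leq 2\exp(-c_1 n^2)
$$
for a constant $c_1=c_1(K)>0$. On the complementary event, $\{\kappa(A)\leq n/t\}$ forces $s_{\min}(A)\geq t/\sqrt{2n}$. Hence for $t\geq\sqrt 2$, applying Theorem~\ref{th: smin} with $t/\sqrt 2\geq 1$ in place of $t$ gives
$$
\Prob\{\kappa(A)\leq n/t\}\leq 2\exp(-c_1 n^2)+2\exp(-c_2 t^2/2),
$$
where $c_2>0$ is the constant from Theorem~\ref{th: smin}.

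It remains to collapse this into a single bound of the form $2\exp(-c\, t^2)$ valid for every $t\geq 1$. For $1\leq t\leq\sqrt 2$ the target bound is a positive constant, so it holds after shrinking $c$ appropriately. For $t>n$ the event itself is empty, since $\kappa(A)\geq 1$. In the remaining range $\sqrt 2\leq t\leq n$ we have $n^2\geq t^2$, so $\exp(-c_1 n^2)\leq \exp(-c_1 t^2)$, and the two tail terms combine with constants adjusted to absorb the factor of $4$. The only mildly delicate point is this last step: a cruder lower bound on $s_{\max}(A)$, for instance via $\|Ae_1\|_2$, would only yield a failure probability of order $\exp(-\Omega(n))$ and thus fail to be absorbed for $t$ in the range $\sqrt n\ll t\leq n$, which is why the Hilbert--Schmidt inequality (giving $\exp(-\Omega(n^2))$) is the natural choice.
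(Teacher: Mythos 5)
Your proposal is correct and follows essentially the same route as the paper: lower-bound $s_{\max}(A)$ by $c\sqrt{n}$ with failure probability $\exp(-c_1 n^2)$ (the quadratic exponent being needed precisely because $t$ ranges up to $n$), then reduce $\{\kappa(A)\le n/t\}$ to the event $\{s_{\min}(A)\gtrsim t/\sqrt{n}\}$ handled by Theorem~\ref{th: smin}, and treat $t>n$ via $\kappa(A)\ge 1$. The only (immaterial) difference is how the norm bound is obtained: you apply Bernstein to $\|A\|_{HS}^2=\sum_{i,j}A_{ij}^2$, while the paper bounds $\|A\|$ below by a single column norm and multiplies the $n$ independent per-column probabilities $\exp(-c_1 n)$ to reach $\exp(-c_1 n^2)$.
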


Theorem~\ref{th: smin} is a consequence of the following theorem, which is of independent interest.

\begin{theor}\label{h-s-norm}
Under conditions of Theorem~\ref{th: smin} one has
$$
\Prob\big\{\|A^{-1}\|_{HS} \leq \min(n/t, \, \sqrt{n/t})\big\}\leq 2\exp(-c t^2),\quad t\geq 0,
$$
where $c>0$ is a constant depending only on $K$.
\end{theor}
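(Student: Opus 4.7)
The plan is to reduce the small-ball event on $\|A^{-1}\|_{HS}$ to a large-deviation event on a more familiar random quantity, then apply a sub-exponential concentration inequality. The main tool is Cauchy--Schwarz in two forms. First, from $n=\sum_{k=1}^n s_k(A)\cdot s_k(A)^{-1}$,
$$n^2\leq \|A\|_F^2\cdot\|A^{-1}\|_{HS}^2,$$
so the event $\{\|A^{-1}\|_{HS}\leq M\}$ forces $\|A\|_F^2\geq n^2/M^2$. Second, using the distance representation $\|A^{-1}\|_{HS}^2=\sum_{i=1}^n\dist(R_i,H_i)^{-2}$, where $R_1,\dots,R_n$ are the rows of $A$ and $H_i=\spn(R_j:j\neq i)$, another Cauchy--Schwarz gives
$$n^2\leq\Bigl(\sum_{i=1}^n\dist(R_i,H_i)^2\Bigr)\cdot\|A^{-1}\|_{HS}^2,$$
so the same event forces $\sum_i\dist(R_i,H_i)^2\geq n^2/M^2$.

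I split by which arm of the minimum is active. For $t\geq n$ we have $M=n/t$, and the first identity yields $\|A\|_F^2\geq t^2$, a genuine large deviation above the mean $\E\|A\|_F^2=n^2\leq t^2$. Since $\|A\|_F^2$ is a sum of $n^2$ i.i.d.\ centered sub-exponential variables $A_{ij}^2-1$ whose sub-exponential norm depends only on $K$, Bernstein's inequality yields $\Prob\{\|A\|_F^2\geq t^2\}\leq 2\exp(-ct^2)$, exactly the required bound.

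For $t\leq n$ we have $M=\sqrt{n/t}$, and the first identity only gives $\|A\|_F^2\geq nt\leq n^2$, which is below the mean and therefore useless by itself. I instead invoke the second identity, which yields $\sum_i\dist(R_i,H_i)^2\geq nt$, and this \emph{is} a large deviation above the mean $n$. Writing $\dist(R_i,H_i)=|\langle R_i,v_i\rangle|$, with $v_i$ the unit normal to $H_i$ and hence a function of $\{R_j\}_{j\neq i}$ alone, the squared distance $\dist(R_i,H_i)^2$ is, conditionally on $R_{-i}$, the square of a centered sub-gaussian random variable of variance $1$, with sub-exponential parameters uniformly controlled in the conditioning. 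Bounding $\E\exp\bigl(\lambda\sum_i\dist(R_i,H_i)^2\bigr)$ by iterated conditioning on the rows and applying Chernoff's inequality should produce $\Prob\{\sum_i\dist(R_i,H_i)^2\geq nt\}\leq 2\exp(-cnt)\leq 2\exp(-ct^2)$, since $nt\geq t^2$ for $t\leq n$.

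The main technical obstacle lies in the second regime. The distances $\dist(R_i,H_i)$ are genuinely correlated across $i$, since perturbing a single row $R_k$ perturbs every normal $v_i$ with $i\neq k$ and therefore every distance. The ``product of MGFs'' factorisation one would like to carry out is thus not immediately available. The saving feature is that each \emph{one-dimensional} conditional distribution is clean --- the square of a centered sub-gaussian variable of variance $1$, with bounds uniform in the conditioning. Exploiting this uniformity, an iterated Laplace-transform argument should go through at the cost of only a constant factor per row; the bookkeeping needed to handle the coupling is where the bulk of the work will be.
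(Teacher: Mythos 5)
Your two Cauchy--Schwarz reductions are fine, but the argument has a genuine gap exactly at its crux, the regime $1\le t\le n$. There you need $\Prob\{\sum_i \dist(R_i,H_i)^2\ge nt\}\le 2\exp(-cnt)$, and the proposed route --- iterated conditioning on the rows to factor the Laplace transform --- does not go through as described: every distance $\dist(R_i,H_i)$ depends on \emph{all} $n$ rows, so there is no ordering of the rows in which the terms are adapted, and no martingale-type peeling is available. The natural monotone surrogate, replacing $H_i$ by the span of the preceding rows only (which can only increase each distance, as one would want for an upper tail), is fatal: conditionally, $\dist(R_i,\spn(R_j:j<i))$ concentrates around $\sqrt{n-i+1}$, so the surrogate sum is of order $n^2$ and the bound collapses. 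The uniform control of each one-dimensional conditional law is not by itself enough; some device is needed to break the coupling, and supplying it is not bookkeeping but the heart of the proof. The paper resolves precisely this point differently: from the negative second moment identity it extracts, by pigeonhole, at least $n/2$ indices with $\dist(\col_i(A),\spn\{\col_j(A):j\ne i\})\ge\sqrt{t/2}$; it then averages over subsets $I$ of size $k=\lfloor t/4\rfloor$, using that for $i\in I$ the distance to $\spn\{\col_j(A):j\notin I\}$ is even larger, and that given this span the $k$ distances are conditionally independent, each concentrating near $\sqrt{k}\approx\sqrt{t}/2$ (Hanson--Wright), so each factor costs $\exp(-\bar c t)$ and the product $\exp(-\bar c tk)\approx\exp(-\bar ct^2/4)$ beats the $(2e)^k2^k$ counting factor coming from permutation invariance. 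If you can prove your correlated-sum estimate, it would give the theorem in this range (indeed $nt\ge t^2$), but as written that estimate is asserted, not proved.

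There is also a smaller, fixable error in the regime $t\ge n$: the claim that Bernstein gives $\Prob\{\|A\|_F^2\ge t^2\}\le 2\exp(-ct^2)$ fails when $t$ is close to $n$, since at $t=n$ the event $\|A\|_F^2\ge n^2$ is a deviation of size zero above the mean and its probability is of constant order, while the target is $2\exp(-cn^2)$. Your reduction does work for, say, $t\ge\sqrt{2}\,n$ (there the deviation $t^2-n^2\ge t^2/2$ makes the Bernstein exponent $\gtrsim t^2$), and the window $n\le t\le\sqrt{2}\,n$ can be patched by monotonicity from the case $t=n$ of the other regime --- but that other regime is exactly where the main gap sits. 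For comparison, the paper does not use $\|A\|_F$ at all for $t>n$: it reruns the same subset/averaging argument with $k=\lfloor n/4\rfloor$ and threshold $t/\sqrt{2n}$, obtaining $\exp(-\bar c kt^2/n)\le\exp(-\bar ct^2/16)$ for $t\ge Cn$ and adjusting constants on $n<t\le Cn$.
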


The proof of Theorem~\ref{h-s-norm} uses, as a main step,
the estimates
$$\Prob\big\{s_{n-k+1}(A)\leq ck/\sqrt{n}\big\}\leq 2 \exp(-c k^2),\quad \quad 1\leq k\leq n,$$
for the singular values of the matrix $A$.
These estimates, based on the restricted
invertibility of matrices and certain averaging arguments,
were recently obtained by Nguyen \cite{Nguyen} under some additional assumptions (which will be discussed in the next section).

\section{Preliminaries}

Given a matrix $A$, it singular values $s_i=s_i(A)$, $i\geq 1$, are square
roots of eigenvalues of $AA^*$. We always assume that $s_1\geq s_2\geq \ldots$
By $\|A\|$ and $\|A\|_{HS}$ we denote the operator $\ell_2\to \ell_2$ norm of $A$ (also
called the spectral norm) and the Hilbert--Schmidt norm respectively. Note that
$$
   \|A\| = s_1 \quad \quad \mbox{ and } \quad \quad \|A\|_{HS}^2=\sum_{i\geq 1} s_i^2.
$$
The columns and rows of $A$ are denoted by $\col_i(A)$ and $\row_i(A)$, $i\geq 1$, respectively.
Given $J\subset [m]$, the coordinate projection in $\R^m$ onto $\R^J$ is denoted by $P_J$.
For convenience, we often write $A_J$ instead of $AP_J$. Given $m\geq 1$, the identity
operator $\R^\ell \to \R^\ell$ we denote by $I_m$.
Given $x, y\in \RR$ by $\la x, \cdot\ra y$  we denote the operator $z\mapsto \la x, z\ra y$
(in the literature it is often denoted by $x\otimes y$ or $y x^\top$). The canonical
Euclidean norm in $\R^m$ is denoted by $\no_2$ and the unit Euclidean sphere by $S^{m-1}$.

As the most important part of our argument, we will use the following result.
\begin{theor}\label{th: sing values}
Let $A$ be an $n\times n$ random matrix with i.i.d. entries of zero mean, unit variance, and
subgaussian moment bounded above by $K>0$.
Then for any $1\leq k\leq n$ one has
$$\Prob\big\{s_{n-k+1}(A)\leq ck/\sqrt{n}\big\}\leq 2 \exp(-c k^2),$$
where $c>0$ is a constant depending only on $K$.
\end{theor}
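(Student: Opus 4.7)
The plan is to extend the Rudelson-Vershynin small-ball framework from single vectors to $k$-dimensional subspaces, with restricted invertibility supplying the tensorization that produces the quadratic exponent $e^{-ck^2}$. By the Courant-Fischer min-max principle,
\begin{equation*}
s_{n-k+1}(A) \;=\; \min_{\dim E = k}\;\max_{v \in E,\,\|v\|_2 = 1} \|Av\|_2,
\end{equation*}
so the event $\{s_{n-k+1}(A)\leq ck/\sqrt n\}$ is contained in the event that some $k$-dim\-en\-sion\-al subspace $E\subset\R^n$ satisfies $\|AP_E\|_{HS}^2\leq c^2 k^3/n$. Writing this as $\sum_{i=1}^n \|P_E\row_i(A)\|_2^2$ displays it as a sum of $n$ i.i.d.\ non-negative random variables, each of mean $k$ and with sub-exponential tails (Hanson-Wright). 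A Laplace-transform / Chernoff argument then yields, for any fixed rank-$k$ projection $P_E$,
\begin{equation*}
\Prob\bigl\{\|AP_E\|_{HS}^2\leq c^2 k^3/n\bigr\}\;\leq\;\exp(-C_0 nk),
\end{equation*}
provided $c$ is small enough. This per-subspace bound is far stronger than the target $e^{-ck^2}$, but no naive union bound over a $\delta$-net of the Grassmannian of $k$-dimensional subspaces (of cardinality $\exp(\Theta(k(n-k)\log(1/\delta)))$) can survive the excess when $k\ll n$.

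The fix, in the spirit of Rudelson-Vershynin, is to partition the $k$-dimensional subspaces into \emph{compressible} ones -- those lying within small Hausdorff distance of some coordinate subspace $\R^J$ with $|J|=O(k)$ -- and \emph{incompressible} ones. For compressible $E$, the quantity $\|AP_E v\|$ is well approximated by $\|A_J(P_J v)\|$, so the problem reduces to a lower bound on the smallest singular value of the tall random submatrix $A_J$; in the regime $k\ll n$ this is of order $\sqrt n$ with probability at least $1-e^{-cn}$ by the Litvak-Pajor-Rudelson-Tomczak tail bound, and a union bound over the $\binom{n}{O(k)}\leq\exp(O(k\log(n/k)))$ choices of $J$ is affordable; the complementary regime $k\asymp n$ is treated directly via Frobenius-norm concentration, since the target $e^{-ck^2}$ is then $e^{-cn^2}$ and $\|A\|_{HS}^2$ concentrates around $n^2$ at that rate. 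For incompressible $E$, an orthonormal basis $v_1,\dots,v_k$ produces an $n\times k$ matrix $V$ whose rows form an isotropic, well-spread configuration in $\R^k$; the Bourgain-Tzafriri / Spielman-Srivastava restricted invertibility theorem extracts a subset $I\subset[n]$ with $|I|=\Theta(k)$ on which $V_I$ is well conditioned, supplying $k$ effectively independent coordinate directions on which vector Littlewood-Offord / Esseen-type small-ball estimates for $A$ can be applied and tensorized.

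The main obstacle is precisely obtaining the quadratic exponent $e^{-ck^2}$ in the incompressible regime. A single direction inside $E$ yields only a small-ball bound of order $e^{-ck}$, so producing the tensorized $e^{-ck^2}$ demands that the $k$ directions inside $E$ act independently at the level of coordinates, which is exactly what restricted invertibility provides. Nguyen's averaging step, integrating over the $\binom{n}{\Theta(k)}$ possible choices of the row subset $I$ produced by Spielman-Srivastava, then absorbs the combinatorial entropy of the Grassmannian cover for incompressible subspaces and yields Theorem~\ref{th: sing values}.
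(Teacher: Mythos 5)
Your proposal correctly identifies the engine that drives the paper's proof (restricted invertibility applied to the bottom singular directions, tensorized small-ball/distance estimates over the selected $\Theta(k)$ coordinates, and Nguyen's counting-plus-averaging to handle the randomness of the selected set), but precisely at that point the argument is asserted rather than carried out, and that is where all the work lies. You never explain how a well-conditioned restriction of the basis of $E$ converts the event $\{s_{n-k+1}(A)\le c_0k/\sqrt n\}$ into quantitative statements about individual columns of $A$: in the paper one forms $Z$ from the bottom right singular vectors, extracts $J$ with $s_\ell(Z_J)\gtrsim\sqrt{k/n}$, and then, via $B=AZ^\top$, a right inverse $M$ of $Z_J^\top$ and projection onto the orthogonal complement of $F=\spn\{\col_i(A)\}_{i\in J^c}$, deduces that at least $\ell/2$ columns $\col_i(A)$, $i\in J$, satisfy $\dist(\col_i(A),F)\lesssim c_0\sqrt\ell$; only then can the Rudelson--Vershynin distance theorem be tensorized over those columns to produce $e^{-c\ell^2}$. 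Equally important, the selected set $J$ depends on $A$, so one must either pay the union bound over $\binom{n}{\ell}$ subsets (affordable exactly when $k\ge\ln n$, with $c_0$ small) or, for $k\le\ln n$, invoke Nguyen's counting statement that \emph{many} subsets are good (which requires the no-gaps delocalization event for almost-kernel vectors, i.e.\ incompressibility enters there, not as a dichotomy on the subspace $E$) together with the averaging claim. Your sketch conflates these two regimes and misattributes what the averaging absorbs: the entropy in question is over coordinate subsets, not over a Grassmannian net --- no net over subspaces is needed at all, since one only ever works with the actual random subspace spanned by the bottom singular vectors.

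The additional structure you introduce is, moreover, partly unsound. The compressible branch rests on the smallest singular value of tall $n\times O(k)$ submatrices and on $\|A\|\lesssim\sqrt n$, events that hold only with probability $1-e^{-cn}$; this cannot deliver the required bound $2e^{-ck^2}$ once $k\gg\sqrt n$, and the reduction itself needs $c\,k^{3/2}/\sqrt n\ll\sqrt n$, i.e.\ $k\ll n^{2/3}$. The proposed patch for large $k$ via Frobenius-norm concentration also fails: for, say, $k=n/2$, the event $s_{n-k+1}(A)\le ck/\sqrt n$ does not force $\|A\|_{HS}^2$ noticeably below $n^2$, because the top half of the spectrum can carry all the mass; ruling that out requires spectral-norm control, which again costs only $e^{-cn}$, far larger than $e^{-cn^2/4}$. (By contrast, in the paper the only $e^{-cn}$ term appears inside the per-column distance estimate and is raised to the power $\ell/2$, so it never spoils the $e^{-ck^2}$ target.) So as written the proposal leaves the regime $\sqrt n\lesssim k<n$ uncovered and leaves the central tensorization step, together with the treatment of the random subset, as a gap.
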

The above theorem, up to some minor modifications, was proved by Nguyen in \cite{Nguyen}.
Specifically, in the case $k\geq C\log n$, the theorem follows from \cite[Theorem~1.7]{Nguyen}
(or \cite[Corollary~1.8]{Nguyen})
if one additionally assumes either that the entries of $A$ are uniformly bounded by a constant,
or that the distribution density of the entries is bounded. Removing these conditions
requires a minor change of the proof in \cite{Nguyen}.
Further, in the case $k\leq C\log n$, the above result (in fact, in a stronger form) is stated as formula (4)
in \cite[Theorem~1.4]{Nguyen}. However, \cite[Theorem~3.6]{Nguyen}, which is used to derive \cite[formula~(4)]{Nguyen},
provides a non-trivial probability estimate only for the event $\{s_{n-k+1}(A)\leq c_\gamma k^{1-\gamma}/\sqrt{n}\}$
(for any given $\gamma\in(0,1)$ and $c_\gamma$ depending on $\gamma$), see \cite[formula~(31)]{Nguyen}.
Again, a minor update of the argument of \cite{Nguyen} provides the result needed for our purposes.
In view of the above and for the reader's convenience, we  provide a proof of Theorem~\ref{th: sing values}
in the last section.
% of this paper.

The following result was proved in \cite{SS} as an extension of the classical Bourgain--Tzafriri
restricted invertibility theorem \cite{BT}. With worse dependence on $\varepsilon$,
the theorem was earlier proved in \cite{V restricted}. See also a recent paper \cite{NY} for further
improvements and discussions.
\begin{theor}[{\cite{SS}}]\label{restr}
Let $T$ be $n\times n$ matrix. Then for any $\varepsilon\in(0,1)$
there is a set $J\subset[n]$ such that
$$
% \ell:=|J|\geq \big\lfloor (\varepsilon \|T\|_{HS}/\|T\|)^2\big\rfloor \quad \mbox{ and } \quad
%$
\ell:=|J|\geq \bigg\lfloor\frac{\varepsilon^2 \|T\|_{HS}^2}{\|T\|^2}\bigg\rfloor \quad\quad  \mbox{ and } \quad \quad
%and
%for every vector $x=(x_i)_{i\in J}$ one has
%$$\Big\|\sum_{i\in J} x_i \col_i(T)\Big\|_2\geq \frac{(1-\varepsilon)\|T\|_{HS}}{\sqrt{n}}\, \|x\|_2.$$
%$$
  s_\ell (T_J)\geq \frac{(1-\varepsilon)\|T\|_{HS}}{\sqrt{n}}.
$$
\end{theor}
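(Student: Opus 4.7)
This is the Spielman–Srivastava restricted invertibility theorem, and I would prove it using the deterministic barrier/greedy selection method originating in Batson–Spielman–Srivastava. After rescaling we may assume $\|T\|_{HS}^2 = n$, so the goal becomes producing $J \subset [n]$ with $|J| = \ell := \lfloor \varepsilon^2 n/\|T\|^2 \rfloor$ such that, writing $v_j := \col_j(T)$, the PSD matrix $\sum_{j \in J} v_j v_j^*$ has rank exactly $\ell$ with smallest nonzero eigenvalue at least $(1-\varepsilon)^2$; note $\sum_j \|v_j\|^2 = n$ and $\max_j \|v_j\|^2 \leq \|T\|^2$.

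I would construct $J = \{i_1,\ldots,i_\ell\}$ greedily. Maintain partial sums $A_k := \sum_{s \leq k} v_{i_s} v_{i_s}^*$, a lower barrier $l_k$ strictly below the smallest nonzero eigenvalue of $A_k$, and the potential $\Phi_k := \mathrm{tr}\bigl((A_k - l_k \Pi_k)^{\dagger}\bigr)$, where $\Pi_k$ is the orthogonal projection onto $\mathrm{range}(A_k)$ and $\dagger$ is the Moore–Penrose pseudoinverse. Start with $A_0 = 0$ and $l_0 = 0$, and plan to raise $l_k$ by a fixed increment $\delta := (1-\varepsilon)^2/\ell$ at each step, so that reaching $l_\ell \geq (1-\varepsilon)^2$ after $\ell$ rounds yields the desired bound on $s_\ell(T_J)$.

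At step $k+1$, use the Sherman–Morrison formula for the rank-one update $A_{k+1} = A_k + v_i v_i^*$ to write $\Phi_{k+1} - \Phi_k$ as an explicit rational function of the quadratic forms $v_i^* (A_k - l_k \Pi_k)^{\dagger} v_i$ and $v_i^* (A_k - l_k \Pi_k)^{\dagger 2} v_i$. Averaging this expression over the candidate indices $i \in [n]$ weighted naturally by $\|v_i\|^2$ converts these quadratic forms into traces of $A_k$-operators weighted by $\sum_i v_i v_i^* = TT^*$, and combining $\mathrm{tr}(TT^*) = n$ with $\|TT^*\| = \|T\|^2$ and the calibration $\ell = \varepsilon^2 n/\|T\|^2$ makes the total averaged cost nonpositive. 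Consequently some $i \notin J_k$ must give $\Phi_{k+1} \leq \Phi_k$ while leaving $l_{k+1} = l_k + \delta$ a valid lower barrier; induction closes the argument.

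The main obstacle is the rank increase: $A_k$ starts at rank $0$ and gains rank $1$ per step, so the pseudoinverse and projection $\Pi_k$ change with $k$ and must be handled carefully in the Sherman–Morrison analysis. In particular, the chosen $v_{i_{k+1}}$ must have a nontrivial component outside $\mathrm{range}(A_k)$; this is enforced automatically by the barrier bookkeeping, since choosing $v_i \in \mathrm{range}(A_k)$ blows up the potential. A second delicate point is the tight calibration of $\delta$, $\ell$, $\|T\|^2$ and $\|T\|_{HS}^2$: the floor in $\ell := \lfloor \varepsilon^2 \|T\|_{HS}^2/\|T\|^2\rfloor$ emerges precisely from the cancellation between the ``first order'' $(A_k - l_k \Pi_k)^\dagger$ and ``second order'' $(A_k - l_k \Pi_k)^{\dagger 2}$ contributions to the potential increment, and tracking the constants through the averaging step is where the bulk of the technical work lies.
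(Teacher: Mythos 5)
The paper itself does not prove Theorem~\ref{restr}: it is quoted from Spielman--Srivastava \cite{SS} (with the earlier \cite{V restricted} and later \cite{NY} mentioned), so the relevant comparison is with the proof in the cited source. Your overall strategy --- greedy selection of columns governed by a lower-barrier potential, Sherman--Morrison rank-one updates, and an averaging argument calibrated by $\|T\|_{HS}^2$, $\|T\|^2$ and $\ell=\lfloor \varepsilon^2\|T\|_{HS}^2/\|T\|^2\rfloor$ --- is exactly the strategy of \cite{SS}. However, as written your sketch has a genuine gap at the very point you flag as the main obstacle, the rank increase. In \cite{SS} the potential is (in the present specialization) the $TT^{*}$-weighted resolvent on the whole space, $\mathrm{tr}\bigl(TT^{*}(A_k-b_k I)^{-1}\bigr)$, not the range-restricted pseudoinverse trace $\mathrm{tr}\bigl((A_k-l_k\Pi_k)^{\dagger}\bigr)$. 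The kernel of $A_k$ then contributes the negative term $-b_k^{-1}\,\mathrm{tr}\bigl(\Pi_k^{\perp}TT^{*}\Pi_k^{\perp}\bigr)$, and it is precisely this term (equivalently, the sign of $1+v_i^{*}(A_k-b_{k+1}I)^{-1}v_i$ in the Sherman--Morrison step) that simultaneously forces the selected column to have a substantial component outside $\mathrm{range}(A_k)$ and certifies that the newly created eigenvalue clears the raised barrier. Your pseudoinverse potential discards the kernel contribution, and then your claim that ``choosing $v_i\in\mathrm{range}(A_k)$ blows up the potential'' is false: adding a vector inside the range only pushes the existing nonzero eigenvalues up, so such a choice is not penalized, the rank can stagnate, and after $\ell$ steps nothing guarantees $s_\ell(T_J)>0$, let alone $s_\ell(T_J)\geq(1-\varepsilon)\|T\|_{HS}/\sqrt{n}$. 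Moreover, even in a rank-increasing step, finiteness of your potential does not by itself certify that the new (smallest) eigenvalue lies above $l_{k+1}$, since terms coming from eigenvalues below the barrier enter the trace with a negative sign; a separate argument is needed, and in \cite{SS} it is supplied by the negative-denominator mechanism just described.

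A second, related issue is the unweighted trace: averaging the Sherman--Morrison quantities over $i\in[n]$ produces $TT^{*}$-weighted traces of powers of the resolvent (since $\sum_i \mathrm{col}_i(T)\,\mathrm{col}_i(T)^{*}=TT^{*}$), which match the \cite{SS} potential identically but match your unweighted potential only after inequalities such as $TT^{*}\preceq\|T\|^2 I_n$; it is not clear that the resulting loss is affordable if one insists on the exact constants $(1-\varepsilon)$ and $\lfloor\varepsilon^2\|T\|_{HS}^2/\|T\|^2\rfloor$ in the statement. Both problems disappear if you run the barrier argument with the weighted full-space resolvent, as in the cited proof.
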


We  will use two following results by Rudelson--Verhsynin. The first one was
one of the key ingredients in estimating the smallest singular value of
rectangular matrices. The second one is an immediate consequence of
the Hanson--Wright inequality \cite{HW, W} generalized in \cite{RV HW}.

\begin{theor}[{\cite{RV}, Theorem 4.1}]\label{RV-dist}
Let $X$ be a vector in $\R^n$, whose coordinates are i.i.d.\ mean-zero, subgaussian
random variables with unit variance. Let $F$ be a random subspace in $\R^n$ spanned
by $n-\ell$ vectors, $1\leq \ell\leq  c' n$, whose coordinates are i.i.d.\ mean-zero, sub-Gaussian random
variables with unit variance, jointly independent with $X$. Then, for every $\eps >0$, one has
$$
\PP\big\{\dist  (X, F)\leq \eps \sqrt{\ell} \big\}\leq (C\eps)^\ell + \exp(-cn).
$$
where $C>0$,  $c, c'\in (0, 1)$ are constants depending only on the subgaussian moments.
\end{theor}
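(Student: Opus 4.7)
The plan is to condition on the random subspace $F$ (which is independent of $X$), isolate a high-probability ``structural'' event for $F$ contributing the additive $\exp(-cn)$ loss, and on that event reduce the bound on $\dist(X, F) = \|P_{F^\perp} X\|_2$ to a density estimate for $P_{F^\perp} X$ on the $\ell$-dimensional subspace $F^\perp$.

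First, $\dist(X,F) = \|P_{F^\perp} X\|_2$, and almost surely $F^\perp$ is the kernel of the $(n-\ell)\times n$ random matrix whose rows are $Y_1^\top, \ldots, Y_{n-\ell}^\top$. I would establish a good event $\Omega_0$ with $\PP(\Omega_0^c) \leq \exp(-cn)$ on which every unit vector $v \in F^\perp$ has essential least common denominator $D(v) \geq \exp(cn)$. This structural property for kernels of tall random subgaussian matrices is by now standard (cf.\ \cite{RV square} and the original \cite{RV}): split the sphere of $F^\perp$ into compressible and incompressible vectors; dispatch the compressible part via concentration of the norms $\|Y_i\|_2$ and the hypothesis $\ell \leq c' n$; on the incompressible part, vectors of small LCD form a net thin enough that a union bound shows they are unlikely to meet $F^\perp$ with the stated probability.

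On $\Omega_0$, condition on $F$ and bound $\PP(\|P_{F^\perp} X\|_2 \leq \varepsilon \sqrt{\ell}\mid F)$. A Littlewood--Offord / Esseen type inequality, combined with $D(v) \geq \exp(cn)$ for every unit $v\in F^\perp$, yields $\sup_a \PP(|\langle v, X\rangle - a| \leq \varepsilon) \leq C\varepsilon$ for all $\varepsilon \geq \exp(-cn)$. The main obstacle is upgrading this one-dimensional anti-concentration to the $\ell$-dimensional statement $(C\varepsilon)^\ell$: for non-Gaussian $X$, the forms $\langle u_i, X\rangle$ arising from any orthonormal basis $u_1, \ldots, u_\ell$ of $F^\perp$ are uncorrelated but not independent, so direct tensorization fails. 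I would bypass this via a Fourier argument: the characteristic function of $P_{F^\perp} X$, viewed as a random vector in $F^\perp$, satisfies $|\phi_{P_{F^\perp}X}(\xi)| = \prod_{j=1}^n |\phi_{X_1}(\xi_j)|$ for $\xi \in F^\perp$ (since $\langle \xi, P_{F^\perp}X\rangle = \langle \xi, X\rangle$), and the uniform LCD lower bound translates, by the standard small-ball estimate for sums of i.i.d.\ characters, into $|\phi_{P_{F^\perp}X}(\xi)| \leq \exp(-c\|\xi\|_2^2)$ for all $\xi \in F^\perp$ with $\|\xi\|_2 \leq \exp(cn)$. Fourier inversion then produces a density of $P_{F^\perp} X$ on $F^\perp$ uniformly bounded by $C^\ell$; integrating over the ball of radius $\varepsilon \sqrt{\ell}$ in $F^\perp$ yields the desired $(C\varepsilon)^\ell$. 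Combining with $\PP(\Omega_0^c) \leq \exp(-cn)$ completes the proof.
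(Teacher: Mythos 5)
This statement is not proved in the paper at all: it is imported verbatim from Rudelson--Vershynin \cite{RV} (Theorem 4.1 there), so your proposal can only be compared with the original argument. Compared that way, your route is not a different one: conditioning on $F$, establishing a structural event of probability $1-\exp(-cn)$ on which every unit vector of $F^\perp$ has exponentially large LCD, and then converting this into an $\ell$-dimensional small-ball bound for $P_{F^\perp}X$ via the identity $\phi_{P_{F^\perp}X}(\xi)=\prod_{j}\phi_{X_1}(\xi_j)$ for $\xi\in F^\perp$ is exactly the scheme of \cite{RV} (their theorem on the LCD of the kernel of a tall random matrix, plus their projection small-ball theorem proved by a multidimensional Esseen argument). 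Note also that the structural step is not a routine net computation: making the LCD bound uniform over the whole random subspace $F^\perp$ (via level sets of the LCD and the compressible/incompressible decomposition) is where most of the work in \cite{RV} lies, and your two-line sketch hides essentially all of it.

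Two steps are wrong as written, though both are repairable by following \cite{RV}. First, the claim that ``Fourier inversion produces a density of $P_{F^\perp}X$ bounded by $C^\ell$'' cannot hold in general: for Rademacher entries $P_{F^\perp}X$ is supported on at most $2^n$ points and has no density, and correspondingly $|\phi_{P_{F^\perp}X}|$ does not decay at infinity along $F^\perp$ and is not integrable. What is actually used is the multidimensional Esseen inequality, which bounds $\sup_v\PP\big\{\|P_{F^\perp}X-v\|_2\le\eps\sqrt{\ell}\big\}$ by $C^\ell\eps^\ell\int_{\|\xi\|_2\le\sqrt{\ell}/\eps}|\phi_{P_{F^\perp}X}(\xi)|\,d\xi$; no density is needed. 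Second, the asserted decay $|\phi(\xi)|\le\exp(-c\|\xi\|_2^2)$ for all $\xi\in F^\perp$ with $\|\xi\|_2\le\exp(cn)$ is impossible: every point of $\R^n$ lies within $\sqrt{n}/2$ of $\Z^n$, so LCD-type information can never push the characteristic function below roughly $\exp(-cn)$, no matter how large the LCD is --- this is precisely why the LCD in \cite{RV} carries the cutoff parameter $\alpha$ and why their projection theorem has an additive $C^\ell\exp(-c\alpha^2)$ term. The correct statement (with $\alpha\asymp\sqrt{n}$) is $|\phi(\xi)|\le\exp\big(-c\min(\|\xi\|_2^2,\,n)\big)$, and it still suffices: inserted into Esseen it gives $(C\eps)^\ell+C^\ell e^{-cn}$, and the second term is absorbed into $e^{-c''n}$ only because $\ell\le c'n$ with $c'$ small --- a point your argument must make explicit, since for $\eps$ as small as $e^{-cn}$ the frequency integration genuinely reaches the regime $\|\xi\|_2\gg\sqrt{n}$.
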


\begin{theor}[{\cite[Corollary 3.1]{RV HW}}]\label{RV-HW}
Let $X$ be a vector in $\R^n$, whose coordinates are i.i.d.\ mean-zero random variables
with unit variance and with  subgaussian moment bounded by $K$.
 Let $F$ be a fixed subspace in $\R^n$ of dimension $n-\ell$.
Then, for every $t >0$, one has
$$
 \PP\big\{| \dist  (X, F)-\sqrt{\ell}|\geq t \big\}\leq 2 \exp(-c t^2/K^4).
$$
where $c>0$ is an absolute constant.
\end{theor}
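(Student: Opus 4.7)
The plan is to realize $\dist(X,F)^2$ as a quadratic form in the subgaussian vector $X$, apply the Hanson--Wright inequality to obtain concentration of this form about its mean $\ell$, and then convert the tail bound for $\dist(X,F)^2$ into one for $\dist(X,F)$ by a two-regime split according as $t\leq\sqrt{\ell}$ or $t>\sqrt{\ell}$.

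Let $P$ be the orthogonal projection of $\R^n$ onto $F^\perp$; it has rank $\ell$ and satisfies $P^2=P$, so $\|P\|_{HS}^2=\mathrm{tr}(P)=\ell$ and $\|P\|=1$. Since $\dist(X,F)=\|PX\|_2$, we have $\dist(X,F)^2=\la X,PX\ra$, and the hypothesis that the coordinates of $X$ are i.i.d., mean zero and unit variance gives $\E\,\la X,PX\ra=\mathrm{tr}(P)=\ell$. The Hanson--Wright inequality for subgaussian vectors applied to the symmetric matrix $P$ then yields
$$
   \PP\big\{\,|\la X,PX\ra-\ell|\geq s\,\big\}\leq 2\exp\Big(-c\,\min\big(s^2/(K^4\ell),\,s/K^2\big)\Big),\qquad s\geq 0,
$$
for some absolute constant $c>0$.

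To pass from the quadratic form to its square root, observe that if $|\dist(X,F)-\sqrt{\ell}|\geq t$, then either $\la X,PX\ra-\ell\geq 2t\sqrt{\ell}+t^2$ (upper case $\dist(X,F)\geq\sqrt{\ell}+t$), or (only possible when $t\leq\sqrt{\ell}$) $\ell-\la X,PX\ra\geq 2t\sqrt{\ell}-t^2\geq t\sqrt{\ell}$ (lower case). In the regime $t\leq\sqrt{\ell}$ both sub-cases give $|\la X,PX\ra-\ell|\geq t\sqrt{\ell}$; plugging $s=t\sqrt{\ell}$ into Hanson--Wright gives $2\exp(-c\min(t^2/K^4,\,t\sqrt{\ell}/K^2))$, and since $t\leq\sqrt{\ell}$ and (without loss of generality) $K\geq 1$ imply $t\sqrt{\ell}/K^2\geq t^2/K^4$, the bound collapses to $2\exp(-ct^2/K^4)$. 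In the regime $t>\sqrt{\ell}$ only the upper case occurs, forcing $\la X,PX\ra-\ell\geq t^2$; plugging $s=t^2$ yields $2\exp(-c\min(t^4/(K^4\ell),\,t^2/K^2))$, and since $t^2>\ell$ one checks that both entries of the minimum exceed $t^2/K^4$, giving again $2\exp(-ct^2/K^4)$.

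The only non-routine point is the square-root conversion and the verification that the Hanson--Wright exponent collapses to $t^2/K^4$ in both regimes; the inequality itself is essentially a direct instance of Hanson--Wright for the rank-$\ell$ projection $P$, so there is no real obstacle beyond the two-case bookkeeping.
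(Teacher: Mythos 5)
Your proof is correct and follows exactly the route the paper intends: the paper quotes this statement from \cite{RV HW} without proof, describing it as an immediate consequence of the Hanson--Wright inequality, and your argument (apply Hanson--Wright to the rank-$\ell$ projection onto $F^\perp$, then do the square-root conversion in the two regimes $t\leq\sqrt{\ell}$ and $t>\sqrt{\ell}$) is precisely that standard derivation. The only point worth recording is that the reduction to $K\geq 1$ is legitimate because the unit-variance assumption forces $K$ to be bounded below by an absolute constant, so absorbing this into $c$ is harmless.
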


We will also need the following standard claim, which can be proved by integrating
the indicator functions (see e.g., {\cite[Claim~3.4]{Nguyen}}, cf. {\cite[Claim~4.9]{LLTTY}}).

\begin{claim}\label{even}
Let $\alpha, p\in (0, 1)$. Let $\Event$ be an event.
Let $Z$ be a finite index set, and
$
\{\Event_z \} _{z\in Z}
$
be a collection of $|Z|$ events satisfying $\PP(\Event_z)\leq p$ for every $z\in Z$.
Assume that at least $\alpha |Z|$ of events $\Event_z$ hold whenever the  event  $\Event$
occurs. Then $\PP(\Event)\leq p/\alpha$.
\end{claim}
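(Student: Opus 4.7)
The plan is to prove Claim~\ref{even} by a standard averaging argument: count, in expectation, how many of the events $\Event_z$ occur, and compare this count with the lower bound forced on it by the assumption on $\Event$.

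First I would introduce the random variable $N:=\sum_{z\in Z}\mathbf{1}_{\Event_z}$ counting how many of the events $\Event_z$ hold. By linearity of expectation and the hypothesis $\PP(\Event_z)\leq p$, one has
$$
\Exp N \;=\; \sum_{z\in Z}\PP(\Event_z)\;\leq\; p\,|Z|.
$$
This gives an upper bound on the average number of events that occur, and uses nothing beyond the marginal bounds on the $\Event_z$.

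Next, I would exploit the assumption that whenever $\Event$ occurs, at least $\alpha|Z|$ of the $\Event_z$'s hold, i.e.\ $N\geq \alpha|Z|$ on the event $\Event$. This means $N\geq \alpha|Z|\,\mathbf{1}_{\Event}$ pointwise, and taking expectations yields
$$
\Exp N \;\geq\; \alpha\,|Z|\,\PP(\Event).
$$
Combining the two bounds gives $\alpha\,|Z|\,\PP(\Event)\leq p\,|Z|$, and dividing by $\alpha|Z|>0$ gives the desired inequality $\PP(\Event)\leq p/\alpha$.

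There is no real obstacle here; the only thing to be careful about is that the events $\Event_z$ are not assumed to be independent or mutually exclusive, so one must rely solely on linearity of expectation (a Fubini/integration of the indicators argument, as the excerpt hints) rather than any probabilistic independence. Once $N$ is introduced, the claim is essentially Markov's inequality applied to $N$ relative to the threshold $\alpha|Z|$.
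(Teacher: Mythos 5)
Your proof is correct, and it is exactly the argument the paper has in mind: the paper states the claim ``can be proved by integrating the indicator functions,'' which is precisely your step of bounding $\Exp\sum_{z\in Z}\mathbf{1}_{\Event_z}$ from above by $p|Z|$ via linearity and from below by $\alpha|Z|\,\PP(\Event)$ via the hypothesis on $\Event$. No gaps; nothing further is needed.
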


\section{Proofs of main results}
{The condition number}

\begin{proof}[Proof of Theorem~\ref{th: smin}]
In the case  $t>n$  we have
\begin{align*}
\Prob\big\{ s_{\min}(A)\geq t/\sqrt{n}
\big\}
&= \Prob\Big\{s_1(A^{-1})\leq \sqrt{n}/t \Big\}\leq
\Prob\Big\{\sum_{i=1}^n s_i(A^{-1})^2\leq n^2/t^2\Big\}
\end{align*}
and the result follows from Theorem~\ref{h-s-norm}.

Now we consider the case $1\leq t\leq n$.
Let $L\geq 1$ be a parameter which we will choose later.
%Using that
Then
\begin{align*}
\Prob\big\{s_{\min}(A)\geq t/\sqrt{n}\big\}
&= \Prob\big\{s_{1}(A^{-1})\leq \sqrt{n}/t\big\}
\\&\leq \Prob\Big\{s_1(A^{-1})^2\leq n/t^2\;\;\mbox{ and }\;\;\sum_{i\geq \lceil t\rceil} s_i(A^{-1})^2\geq L n/t\Big\}\\
&\hspace{2cm}+\Prob\Big\{s_1(A^{-1})^2\leq n/t^2\;\;\mbox{ and }\;\;\sum_{i\geq  \lceil t\rceil} s_i(A^{-1})^2< L n/t\Big\}\\
&\leq \Prob\Big\{\sum_{i\geq \lceil t\rceil} s_i(A^{-1})^2\geq L n/t\Big\}
+\Prob\Big\{\sum_{i=1}^n s_i(A^{-1})^2\leq n/t+ L n/t\Big\}.
\end{align*}
For the first summand in the last expression, we apply Theorem~\ref{th: sing values}.
Since $\sum_{i=\lfloor t\rfloor}^\infty \frac{1}{i^2}\leq \frac{2}{t}$,
we obtain
\begin{align*}
\Prob\Big\{\sum_{i=\lceil t\rceil}^n s_i(A^{-1})^2\geq L n/t\Big\}
&\leq \sum\limits_{i=\lceil t\rceil}^n \Prob\big\{s_i(A^{-1})^2\geq L n/(2 i^2)\big\}\\
&= \sum\limits_{i=\lceil t\rceil}^n \Prob\big\{s_{n-i+1}(A)\leq \sqrt{2} i/\sqrt{L n}\big\}.
\end{align*}
Choosing $L$ so that $\sqrt{2/L}$ is equal to the constant from Theorem~\ref{th: sing values},
we get
$$
\sum\limits_{i=\lfloor t\rfloor}^n \Prob\big\{s_{n-i+1}(A)\leq \sqrt{2} i/\sqrt{L n}\big\}
\leq 2  \sum\limits_{i=\lfloor t\rfloor}^n \exp(-c i^2)\leq 3 \exp(-c' t^2)
$$
for some $c'>0$ depending only on $K$.
%the subgaussian moment of the entries of $A$.
The bound on the second summand follows from Theorem~\ref{h-s-norm} applied with $t/(L+1)$ instead of
$t$. This completes the proof.

\end{proof}

\medskip

\begin{proof}[Proof of Corollary~\ref{cond}]
Theorem~\ref{RV-HW} implies that there exists an absolute constant
$c_1>0$ depending only on $K$  such that for every $i\leq n$
\begin{equation*}
  \PP(\|\col_i(A) \|_2\leq  \sqrt{n}/2 )\leq \exp(- c_1  n)
\end{equation*}
(this can be shown by direct calculations as well, see e.g. Fact~2.5 in \cite{LPRT}).
Since the entries of $A$ are independent, we obtain
\begin{equation*}
  \PP(\|A \|\leq  \sqrt{n}/2 )\leq \prod_{i=1}^n
  \PP(\|\col_i(A) \|_2\leq  \sqrt{n}/2 )\leq \exp(- c_1  n^2).
\end{equation*}
Note that if $\|A \|\geq  \sqrt{n}/2$ and $\kappa(A)\leq  n/2t$ then
$s_n(A)= \|A\|/\kappa(A)\geq t/\sqrt{n}$.
Therefore, by Theorem~\ref{th: smin},
$$
  \PP\{\kappa(A)\leq  n/2t\} \leq 2\exp(-c t^2) + \exp(- c_1  n^2).
$$
By adjusting constants, this implies the conclusion  for $t\leq n$.
Since $\kappa(A)\geq 1$, the case $t>n$ is trivial.
\end{proof}

\medskip

\begin{proof}[Proof of Theorem~\ref{h-s-norm}]
Adjusting the constant in the exponent if
needed, without loss of generality, we assume  that $t\geq C_0$, where $C_0>0$ is a
large enough constant depending only on $K$.
Denote
$$
 \Event_0:=\bigg\{\sum_{i=1}^n s_i(A^{-1})^2\leq n/t\bigg\}.
$$

We first consider the case $t\leq n$.
Applying the negative second moment identity (see e.g. Exercise 2.7.3 in \cite{Tao}),
$$
\sum_{i=1}^n s_i(A^{-1})^2=\sum_{i=1}^n \dist\big(\col_i(A),\spn\{\col_j(A),\; j\neq i\}\big)^{-2},
$$
% We can assume without loss of generality that $t\geq 4K+4$.
%Hence,
we observe that on the event $\Event_0$,
%$\big\{\sum_{i=1}^n s_i(A^{-1})^2\leq n/t+ Kn/t\big\}$,
$$\big|\big\{i\leq n:\;\dist\big(\col_i(A),\spn\{\col_j(A),\; j\neq i\}\big)\geq \sqrt{t/2}\big\}\big|\geq n/2.$$
For each subset $I\subset [n]$ of cardinality $k\leq n/2$
(the actual value of $k$ will be defined later), let ${\bf 1}_{I}$ be the indicator of the event
$$
 \big\{\dist\big(\col_i(A),\spn\{\col_j(A),\; j\in [n]\setminus I\}\big)\geq \sqrt{t/2}
\mbox{ for all }i\in I\big\}.
$$
Then, in view of the above, everywhere on the event $\Event_0$
%$\big\{\sum_{i=1}^n s_i(A^{-1})^2\leq n/t+ Kn/t\big\}$
we have
$$
 \sum_{I\subset[n],\;|I|=k} {\bf 1}_{I}\geq {\lceil n/2\rceil \choose k}\geq \bigg(\frac{n}{2k}\bigg)^k
  \geq  (2e)^{-k} {n\choose k}.
$$
%for some universal constant $\widetilde c>0$.
Hence, by Markov's inequality and permutation invariance of the matrix distribution,
$$
\Prob(\Event_0)
%\Big\{\sum_{i=1}^n s_i(A^{-1})^2\leq n/t+ Kn/t\Big\}
\leq  (2e)^{k} \, \Exp\, {\bf 1}_{[k]}.
$$
As the last step of the proof, we estimate the expectation of ${\bf 1}_{[k]}$ (with a suitable choice of $k$).
In view of independence and equidistribution of the matrix columns, we have
$$
\Exp\, {\bf 1}_{[k]}=\left(
\Prob\big\{
\dist\big(\col_1(A),\spn\{\col_j(A),\; j\in [n]\setminus [k]\}\big)\geq \sqrt{t/2}
\big\}\r)^k.
$$
Choose $k:=\lfloor t/4\rfloor\leq n/2$ and denote
$$D:=\dist\big(\col_1(A),\spn\{\col_j(A),\; j\in [n]\setminus [k]\}\big).$$
%Then
%$$
%\Exp\, D
%%%\dist\big(\col_1(A),\spn\{\col_j(A),\; j\in [n]\setminus [k]\}\big)
%\leq k\leq t/(4L+4),$$
%and
Using independence of columns of the matrix  $A$ and
applying Theorem~\ref{RV-HW} with $\ell=k$ and
$F=\spn\{\col_j(A),\; j\in [n]\setminus [k]\}$,
 we obtain
$$
 \Prob \Big\{ D \geq \sqrt{t/2} \Big\} \leq
 \Prob\Big\{ D - \sqrt{k} \geq (\sqrt{2}-1)\, \sqrt{t/4} \Big\} \leq 2\exp(-\bar c\, t)
$$
%\begin{align*}\Prob&\big\{
%\dist\big(\col_1(A),\spn\{\col_j(A),\; j\in [n]\setminus [k]\}\big)\geq \sqrt{t/(2L+2)}
%\big\}\\&\leq \Prob\big\{\dist\big(\col_1(A),\spn\{\col_j(A),\; j\in [n]\setminus [k]\}\big)\\
%&\hspace{1cm}-\Exp\,
%\dist\big(\col_1(A),\spn\{\col_j(A),\; j\in [n]\setminus [k]\}\big)\geq (\sqrt{2}-1)\,\sqrt{t/(4L+4)}
%\big\}.\end{align*}
%Then, applying the Hanson--Wright inequality \cite{HW, W, HKZ, RV HW}, we obtain
%$$\Prob\big\{
%\dist\big(\col_1(A),\spn\{\col_j(A),\; j\in [n]\setminus [k]\}\big)\geq \sqrt{t/(2L+2)}\big\}
%\leq 2\exp(-\bar c\, t)$$
for some $\bar c>0$ depending only on $K$.
%the subgaussian moment of the entries.
Hence,
$$
%\Prob\Big\{\sum_{i=1}^n s_i(A^{-1})^2\leq n/t+ L n/t\Big\}
\Prob(\Event_0) \leq
(2e)^{k} \,2^k\,\exp(-\bar c\, t\,k)\leq \exp(-\bar c t^2/16),
$$
provided that $t$ is larger than a certain constant depending only on $K$.
This implies the desired result for $t\leq n$.

In the case $t>n$ we essentially repeat the argument along the same lines. Define
$$
\Event_0':=\bigg\{\sum_{i=1}^n s_i(A^{-1})^2\leq n^2/t^2\bigg\}.
$$
Observe that on the event $\Event_0'$,
$$
  \big|\big\{i\leq n:\;\dist\big(\col_i(A),\spn\{\col_j(A),\; j\neq i\}
  \big)\geq t/\sqrt{2n}\big\}\big|\geq n/2.
$$
Repeating the above computations with the same notation and with $k=\lfloor n/4\rfloor$ we obtain
 $$
 \Prob \Big\{ D \geq t/\sqrt{2n} \Big\} \leq
 \Prob\Big\{ D - \sqrt{k} \geq t/(5 \sqrt{n}) \Big\} \leq 2\exp(-\bar c\, t^2/n),
$$
which leads to
$$
\Prob(\Event_0') \leq
(2e)^{k} \,2^k\,\exp(-\bar c\, k t^2/n )\leq \exp(-\bar c t^2/16),
$$
provided that $t> Cn$ for large enough $C$ depending only on $K$. For
$n< t\leq Cn$ the result follows by adjusting the constants.
\end{proof}

\section{Small ball estimates for singular values}

The goal of this section is to prove Theorem~\ref{th: sing values}.
As we have noted, the argument essentially reproduces that of \cite{Nguyen}.
An important part of the proof is the use of restricted invertibility
(see also \cite{Cook} and \cite{RT} for some recent applications of restricted invertibility
in the context of random matrices).

\medskip

We will use a construction from \cite{Nguyen}.
Given an integer $k$ and an $n\times n$  matrix $A$ define a $k\times n$ matrix $Z=Z(A, k)$ in the following way.
Consider singular value decomposition $A=\sum_{i=1}^n s_i \la v_i, \cdot\ra w_i$, where $s_i=s_i(A)$ are
singular values of $A$ (arranged in non-increasing order) and $\{v_i\}_i$, $\{w_i\}_i$ are two
orthonormal systems in $\RR$. For $i\leq k$ denote $z_i= v_{n-i+1}$. Let $Z$ be the matrix whose
rows are $\row_i(Z)= z_i$. Clearly, the rows of $Z$ are orthonormal and for every $i\leq k$,
\begin{equation}\label{prop1}
  \|Az_i\|_2 = s_{n-i+1}\leq s_{n-k+1}.
\end{equation}
Moreover,
\begin{equation*}
%\label{prop2}
 \|Z\|=1\quad \quad \mbox{ and } \quad \quad \|Z\|_{HS}=\sqrt{k}.
\end{equation*}
The matrix $Z$ is not uniquely defined when some of the $k$ smallest singular values of $A$ have non-trivial multiplicity;
we will however assume that for each realization of $A$, a single admissible $Z$ is chosen in such a way that $Z$
is a (measurable) random matrix.

\medskip

\subsection{Proof of Theorem~\ref{th: sing values}, the case $k\geq \ln n$}
\label{pf-1}

%\begin{proof}[Proof of Theorem~\ref{th: sing values}]
%Fix a parameter $\gamma \in (0, 1)$.
Let $C, c, c'$ be constants from Theorem~\ref{RV-dist}.
Let $\gamma =\sqrt{c'}$. Note that $C, c, c', \gamma$ depend only on $K$.
Let $Z=Z(A, k)$ be the $k\times n$ matrix constructed above.
Applying Theorem~\ref{restr} to $Z$ (one can add zero rows to make it an $n\times n$ matrix),
there exists $J\subset [n]$ such that
$$
 |J| = \ell:=\lfloor \gam^2 k \big\rfloor\leq c' k \quad \quad \mbox{ and } \quad \quad
 s_\ell (Z_J)\geq (1-\gam ) \sqrt{k/n}.
$$
Fix a (small enough, depending on $K$) constant $c_0>0$. Define the event
$$
 \Event_k := \big\{s_{n-k+1}(A)\leq c_0 k/\sqrt{n}\big\}.
$$
Consider the $n\times k$  matrix $B=A Z^\top$. Using property \eqref{prop1}, on the event  $\Event_k$,
we have for every $i\leq k$,
$$
%\|(\la \row_j (A) , \row_i (A))_i\ra \|_2
 \|\col _i (B) \|_2=  \|A z_i\|_2\leq c_0 k/\sqrt{n},
$$
hence $\|B\|_{HS}\leq c_0 k^{3/2}/\sqrt{n}$.
Now, since $s_\ell (Z_J)>0$, there exists a $k\times \ell$ matrix $M$ such that $Z^\top_J M=I_\ell$. Then
$$
   \|M\| =1/s_\ell (Z)\leq (1-\gam )^{-1} \sqrt{n/k}.
$$
Therefore,
$$
 \|B M\|_{HS}\leq \|B\|_{HS}\, \|M\|\leq  c_0 (1-\gam )^{-1} k.
$$
Writing $B= A_J (Z_J)^\top + A_{J^c} (Z_{J^c})^\top$, we also have
$BM=  A_J  + A_{J^c} (Z_{J^c})^\top M$.
Next denote
$$
 F=F(A, J):=\spn\{ \col_i(A_{J^c})\}_{i\in J^c},
$$
and let $P$ be the orthogonal projection on $F^\perp$.
%$$ \left( \spn\{ \col_i(A_{J^c})\}_{i\leq n}\r)^\perp . $$
Then, on the event $\Event _k$,
$$
 c_0^2 (1-\gam )^{-2} k^2 \geq \|P B M\|_{HS}^2 \geq \|P  A_J\|_{HS}^2 =
 \sum _{i\in J} \|P \, \col_i(A_J)\|^2_2 =  \sum _{i\in J} \dist ^2 (\col_i(A), F) .
$$
Therefore, for at least $\ell/2$ indices $i\in J$, one has
$$
   \dist  (\col_i(A), F)  \leq \sqrt{2} c_0 (1-\gam )^{-1} k/\sqrt{\ell}
   \leq 2 c_0 \sqrt{\ell}/((1-\gam )\gam^2) .
$$
%Denote the corresponding
%set by $J_1$ and fix some $j\in J_1$.
%Fix some $j\in J$.
Note that the subspace $F$ is spanned by $n-\ell$ random vectors,
it is independent of columns $\col_i(A)$, $i\in J$, and
that columns of $A$ are independent. Therefore, by Theorem~\ref{RV-dist} and the union bound
we obtain
\begin{align*}
  \PP(\Event _k)&\leq \sum _{J\subset[n]\atop |J|=\ell}  \sum _{J_1\subset J \atop |J|=\lceil \ell/2\rceil}
  \PP\Big\{ \forall i\in J_1 \,\, \,\, \dist  (\col_i(A), F)  \leq 2 c_0 \sqrt{\ell}/((1-\gam )\gam^2)\Big\}
  \\& \leq
  {n\choose \ell}\, 2^\ell \, \left((2 C c_0 /((1-\gam )\gam^2))^{\ell} + \exp(-cn) \r)^{\ell/2}
  \\& \leq
  \left(\frac{4en}{\ell} \max\left\{\left(\frac{\sqrt{2 C c_0}}{\gam \sqrt{1-\gam}}\r)^\ell,\, \exp(-cn/2)\r\}\r)^{\ell}
\end{align*}
 Choosing
 small enough  $c_0$ and using $k\geq \ln n$, we obtain
 $
   \PP(\Event _k) \leq \exp(-c_3  \ell^2) ,
%  2^\ell \,\, \left((2 C c_0 /((1-\gam )\gam^2))^{\ell} + \exp(-cn) \r)^{\ell/2}.
 $
 where $c_3>0$ depends only on $K$. By adjusting constants this proves the desired result for $k\geq \ln n$.
\qed

\subsection{Proof of Theorem~\ref{th: sing values}, the case $k\leq \ln n$}

Let $A$ be as in Theorem~\ref{th: sing values}.
It is well known (see e.g. Fact~2.4 in \cite{LPRT}) that there is an absolute constant
$C_1>0$ such that
\begin{equation}\label{bdd}
  \PP\big\{\|A\|\leq C_1 K\sqrt{n} \big\}\geq 1- e^{-n}.
\end{equation}
Let $\Event_{bd}$ denote the event from this equation.
Further, from \cite[Theorem~1.5]{RV no-gaps} one infers that
for any $\gamma>0$ there are $\gamma_1,\gamma_2,\gamma_3>0$ depending only on $\gamma$ and $K$ such that,
denoting
\begin{align*}
\Event_{inc}(\gamma) :=
\big\{&\forall x \in S^{n-1}  \, \mbox{ with }  \, \|Ax \|_2\leq \gamma_1 \sqrt{n}, \, \,
\forall I\subset [n]\\
&\mbox{ with } \,  |I|\geq \gamma n   \, \mbox{ one has }  \,
\|P_I x\|_2\geq \gamma_2 \big\},
\end{align*}
the event satisfies
\begin{equation}\label{inc}
\Prob(\Event_{inc}(\gamma))\geq 1-2 e^{-\gamma_3 n}.
\end{equation}

%Lemma~3.3 from \cite{RV square} combined with \eqref{bdd} states that one can choose positive
%$\gamma_1, \gamma_2, \gamma_3, \gamma_4$ (which we fix below) depending only on $K$  such that
%\begin{equation}\label{inc}
%\PP(\Event_{inc}\cap \Event_{bd})\geq 1- 2 e^{-\gamma_4 n}.
%\end{equation}
%Below we fix these constants $\gamma_1, \gamma_2, \gamma_3, \gamma_4$

The following statement was proved by  Nguyen ({\cite[Corollary~3.8]{Nguyen}}).

\begin{prop}\label{Ng-38}
%[{\cite[Corollary~3.8]{Nguyen}}]
For any $K>0$ there are $C, c_1,c_2,\gamma>0$ depending only on $K$ with the following property.
Let $A$ be an $n\times n$ random matrix with i.i.d.\ entries of zero mean, unit variance, and
subgaussian moment bounded above by $K$. Let $2\leq k\leq n/(C\ln n)$, and let
the random $k\times n$ matrix $Z=Z(A, k)$ be defined as above.
Then everywhere on the event $\big\{s_{n-k+1}(A)\leq c_1 k/\sqrt{n}\big\}\cap \Event_{inc}(\gamma)\cap \Event_{bd}$
one has
$$
\big|\big\{J\subset[n]:\;|J|=\lfloor k/2\rfloor,\;s_{\lfloor k/2\rfloor}(Z_J)\geq c_1\sqrt{k/n}\big\}\big|
\geq c_2^{k\ln k}\, n^{\lfloor k/2\rfloor}.
$$
\end{prop}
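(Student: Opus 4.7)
The plan is to combine the restricted invertibility theorem (Theorem~\ref{restr}), which produces one ``seed'' good subset, with a combinatorial extension argument driven by the incompressibility hypothesis, which amplifies this seed into exponentially many good subsets.

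First I would reduce the proposition to an incompressibility statement for the bottom singular subspace. Let $E\subset\R^n$ be the row span of $Z$; it is $k$-dimensional. Every unit $x=Z^\top u\in E$ satisfies $\|Ax\|_2\leq s_{n-k+1}(A)\leq c_1k/\sqrt{n}$ on the intersection event, and the assumption $k\leq n/(C\ln n)$ with $C$ chosen large enough forces $c_1k/\sqrt{n}\leq\gam_1\sqrt{n}$, so $\Event_{inc}(\gam)$ gives $\|P_I x\|_2\geq\gam_2$ for every unit $x\in E$ and every $|I|\geq\gam n$. The same incompressibility then holds on any subspace $F\subset E$; below I apply this with $F=Z^\top V^\perp$ as $V$ ranges over subspaces of $\R^k$.

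Next I would produce a seed by applying Theorem~\ref{restr} to $Z$ with $\eps$ slightly above $1/\sqrt{2}$: since $\|Z\|_{HS}^2/\|Z\|^2=k$, this yields $J_0$ of size $\lfloor k/2\rfloor$ with $s_{\lfloor k/2\rfloor}(Z_{J_0})\geq c'\sqrt{k/n}$, and passing to a sub-subset preserves the bound by column-submatrix interlacing. The core counting step is a greedy extension: at stage $m$, having chosen $J_m$ of size $m$ with $s_m(Z_{J_m})\geq c_1\sqrt{k/n}$, I set $V_m:=\spn\{\col_j(Z):j\in J_m\}\subset\R^k$, let $P_m^\perp$ be the orthogonal projection onto $V_m^\perp$, and consider the projection $Q_m:=Z^\top P_m^\perp Z$ of $\R^n$ onto $Z^\top V_m^\perp\subset E$, whose trace equals $k-m\geq k/2$ and whose diagonal is $(Q_m)_{jj}=\|P_m^\perp\col_j(Z)\|_2^2$. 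Averaging the incompressibility bound $\|P_I(\sum_i a_i w_i)\|_2^2\geq\gam_2^2$ (for every unit $a$, with $\{w_i\}$ an orthonormal basis of $Z^\top V_m^\perp$) over the uniform measure on $S^{k-m-1}$ produces $\sum_{j\in I}(Q_m)_{jj}\geq(k-m)\gam_2^2$ for every $|I|\geq\gam n$. A standard layer-cake argument then forces at least $\gam n$ indices $j$ with $\|P_m^\perp\col_j(Z)\|_2^2\geq c''k/n$ for an absolute $c''>0$; discarding the at most $m<k/2$ indices already chosen leaves $\geq\gam n/2$ admissible next-indices. Granted the extension step preserves the singular value bound, counting is routine: the number of ordered admissible sequences of length $\lfloor k/2\rfloor$ is at least $(\gam n/2)^{\lfloor k/2\rfloor}$, and dividing by $\lfloor k/2\rfloor!\leq k^{\lfloor k/2\rfloor}$ gives $\geq(\gam/(2k))^{\lfloor k/2\rfloor}n^{\lfloor k/2\rfloor}\geq c_2^{k\ln k}n^{\lfloor k/2\rfloor}$ for a suitable $c_2\in(0,1)$ depending only on $K$.

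The principal technical obstacle is the singular value bookkeeping in the extension step. From the Schur complement identity $\det(Z_{J_m\cup\{j\}}^\top Z_{J_m\cup\{j\}})=\|P_m^\perp\col_j(Z)\|_2^2\det(Z_{J_m}^\top Z_{J_m})$ together with the interlacing bounds $s_i([Z_{J_m}\mid v])\geq s_i(Z_{J_m})$ and $s_i([Z_{J_m}\mid v])\leq s_{i-1}(Z_{J_m})$, one only obtains the crude lower bound $s_{m+1}\gtrsim s_m\cdot\|P_m^\perp\col_j(Z)\|_2/(\|Z_{J_m}\|+\|\col_j(Z)\|)$, which with $s_m,\|P_m^\perp\col_j(Z)\|_2\sim\sqrt{k/n}$ loses a multiplicative factor of order $\sqrt{k/n}$ per step and is therefore unsustainable across $\lfloor k/2\rfloor$ extensions. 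Overcoming this loss is the heart of the argument: one must exploit that typical columns of $Z$ have norm of order $\sqrt{k/n}$ (so each perturbation is small in operator norm even though $\|Z\|_{HS}=\sqrt{k}$), and probably bundle several additions together and re-invoke Theorem~\ref{restr} on a suitable reduced submatrix rather than adding columns one at a time. The tight hypothesis $k\leq n/(C\ln n)$ appears precisely to absorb the logarithmic losses that accumulate in this bookkeeping.
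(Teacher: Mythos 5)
There is a genuine gap, and you identify it yourself: the greedy extension step is never established, and it is precisely the heart of the proposition. Your scheme needs the implication ``$s_m(Z_{J_m})\geq c_1\sqrt{k/n}$ and $\dist(\col_j(Z),\spn\{\col_i(Z):i\in J_m\})\geq c''\sqrt{k/n}$ imply $s_{m+1}(Z_{J_m\cup\{j\}})\geq c_1\sqrt{k/n}$ with the \emph{same} constant,'' but no such implication holds: as your own Schur-complement/interlacing computation shows, the best generic bound is of the form $s_{m+1}\gtrsim s_m\,d/\|[Z_{J_m}\mid v]\|$, and since the operator norm of the augmented block is only bounded by $1$ (no-gaps delocalization gives lower bounds on mass of projections, not upper bounds on individual column norms), each augmentation can lose at least a constant factor. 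Over $\lfloor k/2\rfloor$ steps this degrades the threshold to $c^{k}\sqrt{k/n}$ or worse, while the proposition requires a threshold $c_1\sqrt{k/n}$ with $c_1$ independent of $k$ (indeed the whole point of the statement, as opposed to the weaker $k^{1-\gamma}$ versions discussed in Section 2, is the absence of any $k$-dependent loss). The suggested repair --- bundling several columns and re-invoking Theorem~\ref{restr} on a reduced submatrix --- is not developed and does not obviously work either: applying Theorem~\ref{restr} to $P_m^{\perp}Z$ restricted to a large coordinate set only produces blocks of size proportional to $\gamma_2^{2}(k-m)$, and gives no control of the cross-interaction between the new block and the previously selected columns, which is exactly where the loss occurs when you try to lower bound $s_{\min}$ of the union. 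So the counting conclusion, which you correctly reduce to this step (``granted the extension step preserves the singular value bound, counting is routine''), is unsupported.

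The parts that do work are the peripheral ones: the observation that every unit vector in the bottom singular subspace is incompressible on the intersection event, the averaging/layer-cake argument showing that many columns have a large component orthogonal to any fixed low-dimensional subspace of that span, the seed obtained from Theorem~\ref{restr} with $\eps^2$ slightly above $1/2$ (and the fact that passing to subsets of columns only increases the smallest singular value), and the final arithmetic converting $(\gamma n/2)^{\lfloor k/2\rfloor}/\lfloor k/2\rfloor!$ into $c_2^{k\ln k}n^{\lfloor k/2\rfloor}$. For comparison, the paper does not prove this proposition at all: it is quoted from Nguyen (\cite[Corollary~3.8]{Nguyen}), whose argument combines restricted invertibility with delocalization and an averaging/counting mechanism that avoids maintaining a fixed singular-value threshold through single-column augmentations; to complete your write-up you would need to supply an argument of that kind, not merely the selection of columns with large residual distance.
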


Now assume that $k\leq \ln n$. Without loss of generality we may also assume that $k$
is bounded below by a large constant.
Let $C, c, c'$ be constants from Theorem~\ref{RV-dist} and $c_1, c_2,\gamma$ from Proposition~\ref{Ng-38}.
Fix for a moment any realization of $A$ from the event
$\big\{s_{n-k+1}(A)\leq c_0 k/\sqrt{n}\big\}\cap \Event_{inc}(\gamma)\cap \Event_{bd}$,
where $c_0\in(0,c_1]$ will be chosen later.
Let $\ell :=\lfloor k/2\rfloor$ and
$$
\mathcal{J} :=\big\{J\subset[n]:\;|J|=\lfloor k/2\rfloor,\;s_{\lfloor k/2\rfloor}(Z_J)\geq c_1\sqrt{k/n}\big\}.
$$
Fix $J\in  \mathcal{J}$ and repeat the procedure used in Subsection~\ref{pf-1} with $J$ and $\ell$.
We obtain that for at least $\ell/2$ indices $i\in J$, one has
\begin{equation}\label{one}
   \dist  (\col_i(A), F)  \leq \sqrt{2} c_0 k/(c_1 \sqrt{\ell})
   \leq 4 c_0\sqrt{\ell}/c_1,
\end{equation}
where $F=\spn\{ \col_i(A_{J^c})\}_{i\in J^c}$.
For any fixed subset $J\subset[n]$ of cardinality $\ell$ consider the event
$$
\Event_J:=\big\{\mbox{for at least $\ell/2$ indices $i\in J$ inequality \eqref{one} holds}\big\}.
$$
Applying Theorem~\ref{RV-dist} and the union bound we observe
\begin{align*}\PP(\Event _J)&\leq
   2^\ell \, \left((4 c_0 C /c_1)^{\ell} + \exp(-cn) \r)^{\ell/2}
  \leq
  \left(4\, \max\left\{\left(4 c_0 C/c_1 \r)^\ell,\, \exp(-cn)\r\}\r)^{\ell/2}.
\end{align*}
Choosing $c_0$ to be small enough we obtain that $\PP(\Event _J)\leq   \exp(-c_4  k^2)$,
where $c_4>0$ depends only on $K$. Combining this with Claim~\ref{even} and Proposition~\ref{Ng-38}
we obtain
$$
 \PP\big(\big\{s_{n-k+1}(A)\leq c_0 k/\sqrt{n}\big\}\cap \Event_{inc}(\gamma)\cap
\Event_{bd}\big)\leq c_2^{- k\ln k } \exp(-c_4  k^2) \leq \exp(-c_5  k^2)
$$
provided that $k\geq C_2$, where $C_2\geq 1 \geq c_5>0$ are constants depending on
%for an appropriate constant $c_5>0$, depending
on $K$ only. By \eqref{bdd} and \eqref{inc} this completes the proof in the case $k\leq \ln n$.
%\end{proof}
 \qed

\bigskip

\bigskip

\noindent
{\bf Acknowledgment.} The authors are grateful to the anonymous referee for careful reading
and valuable suggestions that have helped to improve the presentation.  The second named
author would like to thank the Department of Mathematical and Statistical Sciences,
University of Alberta, for ideal working conditions.

\address

\end{document}